\theoremstyle{plain}
\newtheorem{theorem}{Theorem}[section]
\newtheorem{prop}[theorem]{Proposition}
\newtheorem{lemma}[theorem]{Lemma}
\theoremstyle{definition}
\newtheorem{remark}[theorem]{Remark}
\newtheorem{fact}[theorem]{Fact}
\newtheorem{definition}[theorem]{Definition}
\newtheorem{example}[theorem]{Example}
\newcommand{\nc}{\newcommand}
\nc{\Z}{\mathbb{Z}}
\nc{\Q}{\mathbb{Q}}
\nc{\N}{\mathbb{N}}
\nc{\C}{\mathbb{C}}
\nc{\M}{\mathcal{M}}
\nc\LL{\mathcal L}
\nc{\dcl}{\operatorname{dcl}}
\nc{\dclq}{\operatorname{acl^\text{eq}}}
\nc{\acl}{\operatorname{acl}}
\nc{\aclq}{\operatorname{acl^\text{eq}}}
\nc\Tq{T^\text{eq}} 
\nc{\bdd}{\operatorname{bdd}}
\nc{\tr}{\operatorname{tr.deg}}
\nc{\ldim}{\operatorname{ldim}}
\nc\inv{ ^{-1}}
\nc{\tp}{\operatorname{tp}}
\nc\cb{\operatorname{Cb}}
\nc\U{\operatorname{U}}
\nc{\cf}{\text{cf. }}
\nc{\eg}{\text{e.g. }}
\def\Ind#1#2{#1\setbox0=\hbox{$#1x$}\kern\wd0\hbox to
  0pt{\hss$#1\mid$\hss} \lower.9\ht0\hbox to
  0pt{\hss$#1\smile$\hss}\kern\wd0}
\def\Notind#1#2{#1\setbox0=\hbox{$#1x$}\kern\wd0\hbox to
  0pt{\mathchardef\nn="0236\hss$#1\nn$\kern1.4\wd0\hss}\hbox to
  0pt{\hss$#1\mid$\hss}\lower.9\ht0 \hbox to
  0pt{\hss$#1\smile$\hss}\kern\wd0}
\def\ind{\mathop{\mathpalette\Ind{}}}
\def\nind{\mathop{\mathpalette\Notind{}}}
\def\indip{\mathop{\ \ \hbox to 0pt{\hss$\mid^{\hbox to
0pt{$\scriptstyle P$\hss}}$\hss}
\lower4pt\hbox to 0pt{\hss$\smile$\hss}\ \ }}
\def\nindip{\mathop{\ \ \hbox to 0pt{\hss$\!\not{\mid}^{\hbox to
0pt{$\scriptstyle\, P$\hss}}$\hss}
\lower4pt\hbox to 0pt{\hss$\smile$\hss}\ \ }}
\begin{document}

\title{Ample Pairs}
\date{\today}

\author{Enrique Casanovas, Amador Martin-Pizarro and Daniel Palacin}
\address{Departament de Matem\`atiques i Inform\`atica;
  Universitat de Barcelona; Gran Via 585; E-08007 Barcelona; Spain}
\address{Abteilung f\"ur Mathematische Logik; Mathematisches Institut;
  Albert-Ludwig-Universit\"at Freiburg; Eckerstra\ss e 1; D-79104
  Freiburg; Germany}
\address{Einstein Institute of Mathematics; Hebrew University of
  Jerusalem; 9190401 Jerusalem; Israel}

\email{e.casanovas@ub.edu}
\email{pizarro@math.uni-freiburg.de} 
\email{daniel.palacin@mail.huji-ac.il}
\thanks{Research partially supported by the program MTM2014-59178-P}  
\keywords{Model Theory, Pairs of Models, Ampleness}
\subjclass{03C45}

\begin{abstract} 
  We show that the ample degree of a stable theory with trivial
  forking is preserved when we consider the corresponding theory of
  belles paires, if it exists. This result also applies to the theory
  of $H$-structures of a trivial theory of rank $1$.
\end{abstract}

\maketitle


\section*{Introduction}
The dichotomy principle, formulated by Zilber and at the base of many
key applications of Geometric Model Theory to Diophantine Geometry,
establishes a division line on the geometry of the minimal sets in a
given theory: either the lattice of algebraically closed sets (in
$\Tq$) is modular or an algebraically closed field can be
interpreted. The dichotomy principle does not hold for strongly
minimal sets, as shown by Hrushovski, who developed a general method
\cite{eH92, eH93} to produce $\omega$-stable theories with prescribed
geometries in terms of underlying dimension functions, which agree
with Morley rank on the resulting theories. Despite the exotic
behaviour of the geometry of his \emph{ab initio} example, it
satisfies a weakening of the modularity principle, which in itself
prevents an infinite field to be interpretable \cite{aP95}. Motivated
by this, Pillay \cite{aP00} and Evans \cite{dE03} introduced the ample
hierarchy of stable theories, in order to provide finer division lines
on the analysis of the geometry of strongly minimal sets.  According
to this hierarchy, motivated by the incidence relation in the euclidean
space of the flags of affine subspaces of increasing dimension, from
one point to a hyperplane, the \emph{ab initio} construction
is of low complexity, whereas algebraically closed fields or the free
non-abelian group \cite{rS15} lie at the very top.

Little is known about preservation theorems for ampleness. In recent
work, Carmona \cite{jfC15} studied the ample degree of a sufficiently
saturated model of simple theory of rank $1$, equipped with a
distinguished predicate for a dense codense independent subset. Any
two such structures are elementarily equivalent and their common
theory is an example of an $H$-structure, as introduced by Berenstein
and Vassiliev \cite{BV16}. He showed that it is preserved whenever the
degree of ampleness is at least $2$. However, an $H$-structure of a
$1$-based theory of rank $1$ need no longer be $1$-based. This marks a
major difference with respect to Poizat's \emph{belles paires} of
models of a stable theory \cite{bP83} (or more generally, lovely pairs
of a simple theory \cite{BPV03}), which remain $1$-based if the
departing theory is \cite[Proposition 7.7]{BPV03}.

In this short note, we explore such preservation results for belles
paires. Imaginaries represent the first obstacle.  For
non-$1$-ampleness (or equivalently $1$-basedness) of belles paires,
the proof in \cite{BPV03} uses a reformulation of it, weakly linear,
which does not mention imaginaries. However, we do not know of a such
a formulation of ampleness, for degree at least $2$.  Notice that the
theory of belles paires does not have geometric elimination of
imaginaries as soon as an infinite group can be defined (or
interpreted) in the departing theory \cite{PV04}. In order to
circumvent this obstacle, we will only consider pairs of a theory with
trivial forking, which prevents the existence of definable infinite
groups. We originally thought that this assumption would only play a
minor role, in order to work with real sets in the definition of
ampleness. However triviality becomes crucial in the proofs. The question
remains thus open, whether generally the theory of belles paires
preserves ampleness.

\section{Trivialities}

From now on, fix a complete theory $T$ with in a language $\LL$. To
avoid dealing with hyperimaginaries and bounded closures, we will
assume that the theory $T$ is stable, though the statements (and their
proofs) hold for $T$ simple with the appropriate modifications.  We
work inside a sufficiently saturated (and strongly homogeneous) model
of $T$, which embeds any model of $T$ as a small elementary
substructure.

We first recall the following definitions from \cite{jG91}:

\begin{definition}\label{D:trival}
  The theory $T$ is \emph{trivial} if, whenever the tuples $a$, $b$
  and $c$ are pairwise independent over a small set of parameters $D$,
  then they are $D$-independent. Likewise, a stationary type $p$ over
  $E$ is trivial, if whenever the tuples $a$, $b$ and $c$, each
  consisting of realisations of $p\,|D$, are pairwise independent over
  $D\supset E$, then they are $D$-independent.

  The theory $T$ is \emph{totally trivial} if, whenever $a\ind_D b$
  and $a\ind_D c$, then $a\ind_D b,c$. 
\end{definition}

Although the above two notions are different, they agree whenever $T$
has finite Lascar rank \cite{jG91}. Notice that our local version of
triviality strengthens the original one from \cite{jG91}. Clearly,
local triviality is preserved under nonforking extensions and
restrictions.

\noindent Let us first remark the following easy observation:

\begin{remark}\label{R:trivial_acl}
  Given a stationary trivial type $p$ over $E$ and some
  $\Tq$-algebraically closed set of parameters $D\supset E$, suppose
  that $b$ is algebraic over $D, a$, where $a$ is a tuple of
  realisations of $p\,|D$. Then $\tp(b/D)$ is also trivial.
\end{remark}

\begin{proof}
  Let $D_1\supset D$ be given and consider three pairwise
  $D_1$-independent tuples $b_1$, $b_2$ and $b_3$ of realisations of
  the non-forking extension of $\tp(b/D)$ to $D_1$.  We may assume
  that $D_1=D$ and each $b_i$ is algebraic over $D, a_i$, where $a_i$
  is a tuple of realisations of $p\,|D$.

  By succesively taking nonforking extensions, we may assume the
  following independences hold:

\[ a_1\ind_{D, b_1} b_2, b_3\quad , \quad a_2\ind_{D, b_2} a_1, b_3 \text{
    and } a_3\ind_{D, b_3} a_2, a_1. \] 

In particular, the tuples $a_1$, $a_2$ and $a_3$ are pairwise
$D$-independent, so they are $D$-independent, as a set. Thus, so are
$b_1$, $b_2$ and $b_3$. 
\end{proof}

\begin{fact}\label{F:CNF}\cite[Corollary 5.1.8]{fW00}
Consider a stationary type $\tp(a/D)$ whose  Lascar rank has
Cantor normal form:

$$\omega^{\alpha_1}\cdot n_1+\cdots+\omega^{\alpha_k}\cdot n_k,$$
with $\alpha_1>\ldots>\alpha_k$ and $n_k\neq 0$. There are (possibly
imaginary)  elements $a=a_1,\ldots, a_k$, with $a_{i+1}$ algebraic over
$D,a_i$ for $i<k$, and  

\[ \U(a/Da_i)= \sum\limits_{j<i} \omega^{\alpha_j}\cdot n_j \text{
    and } \U(a_i/D)= \sum\limits_{j=i}^k \omega^{\alpha_j}\cdot n_j\]

\noindent In particular, the element $a_k$ is algebraic over $D, a$ and has
Lascar rank $\omega^{\alpha_k}\cdot n_k$. 
\end{fact}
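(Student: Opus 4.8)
The plan is to prove the statement by induction on the length $k$ of the Cantor normal form, the engine being the Lascar inequalities together with one small observation on ordinal arithmetic. Recall that for any (possibly imaginary) $b\in\aclq(Da)$ one has $\U(ab/D)=\U(a/D)$, and therefore
\[ \U(a/Db)+\U(b/D)\ \le\ \U(a/D)\ \le\ \U(a/Db)\oplus\U(b/D), \]
where $+$ is ordinal and $\oplus$ is the natural (Hessenberg) sum. The key arithmetic point is that $+$ and $\oplus$ agree on a pair $\sigma,\tau$ as soon as every exponent in the Cantor normal form of $\sigma$ strictly exceeds every exponent in that of $\tau$; when this happens the two outer terms above coincide and $\U(a/D)$ is pinned down exactly.

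The base case $k=1$ is immediate, taking $a_1=a_k=a$. For the inductive step the entire content lies in the following \emph{single-step extraction}: given stationary $\tp(a/D)$ with $\U(a/D)=\omega^{\alpha_1}\cdot n_1+\rho$ and $\rho=\sum_{j\ge 2}\omega^{\alpha_j}\cdot n_j<\omega^{\alpha_1}$, there is an imaginary $a_2\in\aclq(Da)$ with $\U(a_2/D)=\rho$ and $\U(a/Da_2)=\omega^{\alpha_1}\cdot n_1$. Granting this, apply the induction hypothesis to $\tp(a_2/D)$, whose rank $\rho$ has a shorter normal form, obtaining $a_2,\dots,a_k$ with $a_{i+1}\in\acl(Da_i)$ and the stated values of $\U(a_i/D)$ and $\U(a_2/Da_i)=\sum_{2\le j<i}\omega^{\alpha_j}\cdot n_j$. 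It then remains to recover $\U(a/Da_i)$ for the original $a$. Since $a_2\in\acl(Da)$ and $a_i\in\acl(Da_2)$, we have $a_2\in\acl(Daa_i)$, so $\U(a/Da_i)=\U(aa_2/Da_i)$; applying the Lascar inequalities to the pair $(a,a_2)$ over $Da_i$ and using $\U(a/Da_2a_i)=\U(a/Da_2)=\omega^{\alpha_1}\cdot n_1$ (as $a_i\in\acl(Da_2)$) together with the value of $\U(a_2/Da_i)$, both bounds collapse — by the arithmetic remark, since the block $\omega^{\alpha_1}\cdot n_1$ has exponent $\alpha_1$ strictly above every $\alpha_j$ with $j\ge 2$ — to $\sum_{j<i}\omega^{\alpha_j}\cdot n_j$, as required. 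The final assertion is the case $i=k$.

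The main obstacle is the single-step extraction itself, namely producing the quotient $a_2$ inside $\aclq(Da)$ of exact rank $\rho$. The natural candidate is a maximal low-rank quotient: among all $b\in\aclq(Da)$ with $\U(b/D)<\omega^{\alpha_1}$ one seeks a largest one under $\acl$-inclusion. Such quotients are directed, since if $\U(b/D),\U(b'/D)<\omega^{\alpha_1}$ then $\U(bb'/D)\le\U(b/D)\oplus\U(b'/D)<\omega^{\alpha_1}$, the additive indecomposability of $\omega^{\alpha_1}$ keeping the natural sum below it; moreover a proper enlargement of a quotient strictly raises its rank, so by superstability the family admits a maximal element $a_2$ up to interalgebraicity over $D$. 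One then checks, again via the Lascar inequalities and the coincidence of $+$ with $\oplus$, that maximality forces $\U(a/Da_2)$ to be a pure multiple of $\omega^{\alpha_1}$, necessarily $\omega^{\alpha_1}\cdot n_1$, whence $\U(a_2/D)=\rho$. Making the existence of this maximal quotient fully rigorous is where the real work lies, and it rests on the $\omega^{\alpha}$-analysis of superstable types (equivalently, on grouping the regular components of $\tp(a/D)$ of rank at least $\omega^{\alpha_1}$); this is exactly what the cited corollary packages.
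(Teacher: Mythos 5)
First, a point of reference: the paper does not prove this statement at all — it is quoted as a Fact with a citation to Wagner's book (Corollary 5.1.8) — so your attempt can only be judged on its own merits, not against an argument in the paper. Your overall architecture is the natural one: induction on the length of the Cantor normal form, with all the weight placed on the single-step extraction (produce $a_2\in\aclq(Da)$ with $\U(a_2/D)=\rho$ and $\U(a/Da_2)=\omega^{\alpha_1}\cdot n_1$), and your bookkeeping recovering $\U(a/Da_i)$ from the inductive hypothesis — via the Lascar inequalities and the agreement of $+$ with $\oplus$ on blocks with strictly decreasing exponents — is correct. The problem is that the single-step extraction, which for $k=2$ \emph{is} the Fact, is never actually proved.

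Two concrete issues. First, the lemma you lean on, ``a proper enlargement of a quotient strictly raises its rank,'' is false: in $\mathrm{DCF}_0$, take $b$ differentially transcendental over $D$ and $b'$ a generic solution of $\delta x=b$; then $b\in\dcl(Db')$ and $b'\notin\acl(Db)$, yet $\U(bb'/D)=\U(b'/D)=\omega=\U(b/D)$, so the closure grows while the rank does not. (The existence of a maximal element of your directed family can be rescued, but by a different argument: if $\aclq(Db_0)\subsetneq\aclq(Db_1)\subsetneq\cdots\subseteq\aclq(Da)$ were an infinite strictly increasing chain, the ordinals $\U(a/Db_n)$ would stabilise, after which $a\ind_{Db_n}b_{n+1}$, forcing $b_{n+1}\in\aclq(Db_n)$, a contradiction.) Second, and more seriously, your claim that ``maximality forces $\U(a/Da_2)$ to be a pure multiple of $\omega^{\alpha_1}$'' is not a Lascar-inequality computation. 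Writing $\U(a/Da_2)=\omega^{\alpha_1}\cdot n_1+\tau$, the inequalities give only $\tau+\U(a_2/D)\le\rho\le\tau\oplus\U(a_2/D)$, which is perfectly consistent with $\tau=\rho$ and $a_2\in\aclq(D)$; so they cannot rule out $\tau>0$. To exploit maximality you need precisely: whenever $\U(a/B)$ has a nonzero tail below $\omega^{\alpha_1}$, there is an element of $\aclq(Ba)\setminus\aclq(B)$ of rank $<\omega^{\alpha_1}$ — and that, applied over the base $Da_2$, is exactly the extraction statement you are trying to establish. Your closing sentence, that making this rigorous ``is exactly what the cited corollary packages,'' concedes the point: the core of the Fact is assumed rather than derived, so the proposal is circular where it matters.
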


Recall that, if a stationary type $p$ over $D$ has Lascar rank in
Cantor normal form

$$\omega^{\alpha_1}\cdot n_1+\cdots+\omega^{\alpha_k}\cdot n_k,$$
with $\alpha_1>\ldots>\alpha_k$ and $n_k\neq 0$, then it is
\emph{non-orthogonal} to a type of Lascar rank $\omega^{\alpha_k}$: 
there is a realisation $a$ of the  non-forking extension of $p$ to
some set $C$ and a stationary type $\tp(b/C)$ of Lascar rank
$\omega^{\alpha_k}$  such that

$$ a\nind_C b.$$

\noindent In \cite[Proposition 2]{jG91}, it is shown that a
superstable theory is trivial if and only if all the regular types in
$\Tq$ are trivial. A detailed study of the proof yields an improvement
of the above result, without assuming superstability, but solely
working with a fixed trivial type of ordinal-valued Lascar rank.
However, observe that our local definition of triviality is more
restrictive than Goode's definition. We believe the following result
is probably well-known but could not find any references:

\begin{prop}\label{P:Goode_triv}
  Let $T$ be a stable (possibly non-superstable) theory and
  $p=\tp(a/D)$ be a stationary trivial type whose Lascar rank has
  Cantor normal form:
 $$\omega^{\alpha_1}\cdot n_1+\cdots+\omega^{\alpha_k}\cdot n_k.$$
 \noindent There is some realisation $a$ of the non-forking extension
 of $p$ to some set $C$ and an imaginary element $e$ algebraic over
 $C,a$ such that $\tp(e/C)$ has Lascar rank $\omega^{\alpha_k}$.

\noindent  In particular, the type $p$ is non-orthogonal to a type of rank
$\omega^{\alpha_k}$. 
\end{prop}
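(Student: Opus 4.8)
The plan is to reduce first to the \emph{monomial case} and then to induct on the multiplicity. Working over $\aclq(D)$ (which keeps $p$ stationary and trivial), apply Fact \ref{F:CNF} to obtain an imaginary $a_k$, algebraic over $D,a$, with $\U(a_k/D)=\omega^{\alpha_k}\cdot n_k$. By Remark \ref{R:trivial_acl} the type $\tp(a_k/D)$ is again trivial. Since any element algebraic over $C,a_k$ is algebraic over $C,a$ whenever $a_k\in\aclq(D,a)$, it suffices to prove the statement for a trivial stationary type of pure rank $\omega^{\alpha}\cdot n$ (writing $\alpha=\alpha_k$, $n=n_k$): namely, to produce $C\supseteq\aclq(D)$, a realisation $a'$ of $p\,|C$, and an imaginary $e\in\aclq(C,a')$ with $\U(e/C)=\omega^{\alpha}$. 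I would induct on $n$. For $n=1$ the type is already of rank $\omega^{\alpha}$, hence regular, and $e=a'$ works.

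For $n>1$, I would invoke the non-orthogonality fact recalled just before the statement: there is $C\supseteq\aclq(D)$, a realisation $a'$ of $p\,|C$, and a stationary type $\tp(b/C)$ of Lascar rank $\omega^{\alpha}$ with $a'\nind_C b$. As $\U(b/C)=\omega^{\alpha}$, this type is regular. I then set
\[
 e:=\cb(b/Ca')\in\aclq(Ca'),
\]
so that $\tp(e/C)$ is trivial by Remark \ref{R:trivial_acl}. The lower bound $\U(e/C)\ge\omega^{\alpha}$ is routine: since $b\nind_C e$ and $\tp(b/C)$ is regular of rank $\omega^{\alpha}$, forking gives $\U(b/Ce)<\omega^{\alpha}$, while the Lascar inequality yields $\omega^{\alpha}=\U(b/C)\le\U(be/C)\le\U(b/Ce)\oplus\U(e/C)$; were $\U(e/C)<\omega^{\alpha}$, both summands would have leading Cantor exponent below $\alpha$, so their natural sum would stay below $\omega^{\alpha}$, a contradiction. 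Together with $e\in\aclq(Ca')$ this confines $\U(e/C)$ to the interval $[\omega^{\alpha},\,\omega^{\alpha}\cdot n]$.

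The main obstacle is the matching \emph{upper} bound $\U(e/C)=\omega^{\alpha}$, i.e.\ that the canonical base of this regular forking has "$r$-weight one''. Here triviality must be used in an essential way, matching the paper's remark that triviality becomes crucial. Concretely, I would take a Morley sequence in $\tp(b/Ce)$ over $Ce$ whose algebraic closure contains $e$; each of its terms realises the regular type $\tp(b/C)$ over $C$, and that type is itself trivial, being non-orthogonal to the trivial $p$. The aim is to show, using triviality, that $e$ cannot fork with two $C$-independent realisations of $\tp(b/C)$ at once, so that $e$ is interalgebraic over $C$ with a single such realisation and hence $\U(e/C)=\omega^{\alpha}$; this is the step adapting Goode's argument and the place where I expect the real work to lie. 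Should a direct weight-one argument resist, I would instead establish that $\U(e/C)=\omega^{\alpha}\cdot m$ is a pure multiple of $\omega^{\alpha}$ with $1\le m\le n$, recurse on $\tp(e/C)$ via the induction hypothesis when $m<n$, and reduce the borderline case $m=n$ by choosing $b$ so that the forking $a'\nind_C b$ is minimal. In all cases the resulting $e$ is algebraic over $C$ together with the realisation of $p\,|C$ and has Lascar rank $\omega^{\alpha}=\omega^{\alpha_k}$, which also gives the stated non-orthogonality.
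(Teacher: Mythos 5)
Your skeleton matches the paper's up to a point (reduction to rank $\omega^{\alpha}\cdot n$ via Fact \ref{F:CNF} and Remark \ref{R:trivial_acl}, then non-orthogonality to a rank-$\omega^{\alpha}$ type $b$, then $e=\cb(b/Ca')$ with the lower bound $\U(e/C)\ge\omega^{\alpha}$), but there are two genuine gaps. The first is your assertion that $\tp(b/C)$ is trivial ``being non-orthogonal to the trivial $p$.'' No such transfer principle is available here: the isomorphism-of-geometries theorem concerns a pair of non-orthogonal \emph{regular} types, and $p$ is not regular (it has rank $\omega^{\alpha}\cdot n$ with $n\ge 2$); moreover, with the paper's strong local notion of triviality, producing a \emph{trivial} type of rank $\omega^{\alpha_k}$ non-orthogonal to $p$ is essentially the content of the proposition itself, so invoking it is close to circular. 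The paper gets triviality honestly by taking a \emph{first} canonical base before yours: it replaces $b$ by $b'=\cb(a/Cb)$, which is algebraic over a finite segment of a Morley sequence of $\tp(a/Cb)$, i.e.\ over a tuple of realisations of $p\,|C$, so Remark \ref{R:trivial_acl} applies; a Lascar-inequality computation then shows $\U(b'/C)=\omega^{\alpha}$ still holds (anything smaller would force $\U(a/C)<\omega^{\alpha}\cdot n$). Your proof only takes one canonical base; the paper needs two, and the one you skipped is exactly what makes the triviality hypothesis usable.

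The second gap is the upper bound, which you yourself flag as ``where the real work lies'' and then do not carry out; the route you sketch would not work as stated. Even if you showed that $e$ cannot fork with two $C$-independent realisations of $\tp(b/C)$, it would not follow that $e$ is interalgebraic over $C$ with a single realisation, nor that $\U(e/C)=\omega^{\alpha}$, since excess rank in $e$ could a priori come from a part orthogonal to $\tp(b/C)$; and proving that forking claim would itself require the triviality of $\tp(b/C)$ from the first gap. The paper's argument is different and sharper: choose a finite initial segment $b_1,\ldots,b_{2m}$ of a Morley sequence of ${\rm stp}(b/Ca)$ with $e$ algebraic over $b_1,\ldots,b_m$, hence by indiscernibility also over $b_{m+1},\ldots,b_{2m}$; since $e\notin\aclq(C)$ the whole sequence cannot be $C$-independent, so triviality of $\tp(b/C)$ together with indiscernibility forces $b_i\nind_C b_j$ for all $i<j$; the Lascar inequalities then give
\[
\U(e/C)\le\U(b_1,\ldots,b_m/C)\le\U(b_1/C)\oplus\bigoplus_{i=1}^{m-1}\U(b_{i+1}/C,b_1,\ldots,b_i)<\omega^{\alpha}\cdot 2,
\]
so $\U(e/C)=\omega^{\alpha}+\beta$ with $\beta<\omega^{\alpha}$; finally Fact \ref{F:CNF} yields $e'\in\aclq(C,e)\subset\aclq(C,a)$ with $\U(e'/C)=\beta$, and since $\U(a/C)=\omega^{\alpha}\cdot n$ forces $a\ind_C e'$ while $e'\in\aclq(C,a)$, one gets $\beta=0$. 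Note also that no induction on $n$ is needed once this is done; your induction and the fallback recursion on the multiplicity $m$ are never actually used or made precise.
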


\begin{proof}

  Set $n=n_k$ and $\alpha=\alpha_k$, and suppose $n\geq 2$.  Remark
  \ref{R:trivial_acl} and Fact \ref{F:CNF} allow us
  to assume that $\U(p)=\omega^{\alpha}\cdot n$. By the above, there
  is a realisation $a$ of the non-forking extension of $p$ to some set
  $C$ and a stationary type $\tp(b/C)$ of Lascar rank
  $\omega^{\alpha}$ such that
$$ a\nind_C b.$$

Set $b'=\cb(a/ C,b)$, which is not not algebraic over $C$, because of
the dependence $a\nind_C b$. Notice that $a\nind_C b'$, since
$a\ind_{C,b'} b$.  As $b'$ lies in $\aclq(C, b)$, its rank $\U(b'/C)$
is bounded by $\omega^\alpha$. If $\U(b'/C)<\omega^\alpha$, then it
contradicts the Lascar inequalities:
$$
\omega^\alpha\cdot n = \U(a/C) \leq
\U(a/C,b')\oplus \U(b'/C)<\omega^\alpha\cdot n.
$$ 

\noindent We may therefore assume that $b=b'$ is algebraic over a
finite segment of a Morley sequence of $\tp(a/C,b)$, so its type
$\tp(b/C)$ is also trivial, by Remark \ref{R:trivial_acl}.

Set $e=\cb(b/C, a)$, which lies in $\aclq(C, a)\setminus \aclq(C)$,
for $ a\nind_C b$. Thus $ a\nind_C e$. As above, a straight-forward
application of the Lascar inequalities yields that
$\U(e/C)\geq \omega^\alpha$.

Let us now show that $\U(e/C)=\omega^\alpha$, which will be done in
two steps: First, we show that $\U(e/C)<\omega^\alpha \cdot 2$. Second
we will prove the actual equality $\U(e/C)=\omega^\alpha $. 

\noindent Since $\U(e/C)\geq \omega^\alpha$, write
$\U(e/C)=\omega^\alpha+\beta$, for some ordinal $\beta$.  Choose a
finite initial segment $b_1,\ldots, b_{2m}$ of a Morley sequence of
${\rm stp}(b/C,a)$ such that $e$ is algebraic over
$b_1,\ldots,b_m$. Notice that $e$ is also algebraic over
$b_{m+1},\ldots,b_{2m}$ by indiscernibility. Thus, the sequence
$b_1,\ldots, b_{2m}$ cannot be $C$-independent, since $e$ is not
algebraic over $C$. Triviality of $\tp(b/C)$ implies that
$$b_i\nind_C b_j, \text{ whenever }i<j.$$

\noindent Hence, the Lascar inequalities
yield the following:

$$
\U(e/C) \le \U(b_1,\ldots,b_m / C) \leq
\U(b_1/C)\oplus\bigoplus_{i=1}^{m-1} \U(b_{i+1}/C,b_1,\ldots,b_{i})
<\omega^\alpha\cdot 2.
$$

\noindent Thus $\beta<\omega^\alpha$. By Fact \ref{F:CNF}, there is
some element $e'$ in $\aclq(C,e)\subset \aclq(C,a)$ such that
$\U(e'/C)=\beta <\omega^\alpha$. Since $\U(a/C)=\omega^\alpha\cdot n$,
we have that $a\ind_C e'$, so $e'$ must be algebraic over $C$, that
is, the ordinal $\beta$ is $0$. We conclude that the element $e$ has
rank $\omega^\alpha$, as desired.
\end{proof}

\section{Ampleness}

As in the previous section, let $T$ denote a complete stable theory in
a language $\LL$. We first recall the definition of $1$-basedness,
CM-triviality and $n$-ampleness \cite{aP00,dE03}:

\begin{definition}\label{D:CM}
	
  The theory $T$ is \emph{$1$-based} if for every pair of
  algebraically closed subsets $A\subset B$ in $\Tq$, and every real
  tuple $c$, we have that $\cb(c/A)$ is algebraic over
  $\cb(c/B)$. Equivalently, for every $\Tq$-algebraically closed set
  $A$ and every real tuple $c$, the canonical base $\cb(c/A)$ is
  algebraic over $c$.
	
  The theory $T$ is \emph{CM-trivial} if for every pair of
  algebraically closed subsets $A\subset B$ in $\Tq$, and every real
  tuple $c$, if $\aclq(Ac) \cap B = A$, then $\cb(c/A)$ is algebraic
  over $\cb(c/B)$.
	
  The theory $T$ is called \emph{$n$-ample} if there are $n+1$ real
  tuples satisfying the following conditions (possibly working over
  parameters):
	\begin{enumerate}
		\renewcommand{\theenumi}{\alph{enumi}}		
		\item\label{D:ample_inter}
                  $\aclq(a_0,\ldots,a_i)\cap\aclq(a_0,\ldots,a_{i-1},a_{i+1})
                  =
		\aclq(a_0,\ldots,a_{i-1})$ for every $0\leq i<n$,
              \item\label{D:ample_indep}
                $a_{i+1} \ind_{a_i} a_0,\ldots, a_{i-1}$ for every
                $1\leq i<n$,
		\item\label{D:ample_forks}  $a_n \nind a_0$.
\end{enumerate}
\end{definition}

By inductively choosing models $M_i\supset a_i$ such that

$$ M_i \ind_{a_i} M_0,\ldots,M_{i-1},a_{i+1},\ldots,a_n,$$

\noindent we can replace, in the definition of $n$-ampleness, all
tuples by models. This was already remarked in \cite[Corollary
2.5]{aP95} in the case of CM-triviality. Likewise, if the theory $T$
is $1$-based, resp. CM-trivial or $n$-ample, the corresponding
conclusion holds whenever the tuples are imaginary.

Every $1$-based theory is CM-trivial. A theory is $1$-based if and
only if it is not $1$-ample; it is CM-trivial if and only if it is not
$2$-ample \cite{aP00}. Observe that $n$-ampleness implies
$(n-1)$-ampleness. Thus, ampleness establishes a strict hierarchy
(see \cite{kT14, BMPZ14, BMPZ17}) among stable theories, according to
which both (pure) algebraically closed fields \cite{aP00} and the free
non-abelian group \cite{rS15} are $n$-ample for every natural number
$n$.

We now give an alternative characterisation of ampleness, which will
be useful in the last section:

\begin{prop}\label{P:ample_oddeven}
	The theory $T$ is $n$-ample if and only if there are $n+1$ 
	tuples satisfying the following conditions (possibly working over
	parameters):
	\begin{enumerate}
        \item\label{P:ample_inter}
          $\aclq(a_i\,,\, i\leq n \text{ even})\cap\aclq(a_i\,,\,
          i\leq n \text{ odd})= \aclq(\emptyset)$,
        \item\label{P:ample_indep}
          $a_{i+1} \ind_{a_i} a_0,\ldots, a_{i-1}$ for every
          $1\leq i<n$,
		\item\label{P:ample_forks} $a_n \nind a_0$.
	\end{enumerate}
        \noindent Furthermore, we may assume that the above tuples are
        real and enumerate small models.
\end{prop}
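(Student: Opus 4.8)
The plan is to observe first that conditions (2) and (3) of the proposition are literally conditions (b) and (c) of Definition \ref{D:CM}, so that the entire content of the equivalence lies in comparing the two intersection conditions under the standing assumption that the chain independence $a_{i+1}\ind_{a_i} a_0,\ldots,a_{i-1}$ holds. I would therefore fix this independence throughout and reduce the proposition to the assertion that, for a fixed sequence $a_0,\ldots,a_n$ satisfying it, the telescoping condition of Definition \ref{D:CM} and the single parity-split condition are equivalent. The final clause, that the witnesses may be taken real and enumerating small models, I would establish a posteriori by the replacement described just before the proposition; one only has to check that the parity-split intersection survives when each $a_i$ is fattened to a model $M_i$ with $M_i\ind_{a_i} M_0,\ldots,M_{i-1},a_{i+1},\ldots,a_n$, which follows from the same independence bookkeeping used in the body.

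Two tools would drive both implications. The first is the elementary stable observation that if $X\ind_Z Y$ with $Z=\aclq(Z)\subseteq\aclq(X)\cap\aclq(Y)$, then $\aclq(X)\cap\aclq(Y)=Z$: any $c$ in the intersection satisfies $c\ind_Z c$, so that $\tp(c/Zc)$ is algebraic and non-forking over $Z$, forcing $c\in\aclq(Z)=Z$. The second is the Markov property extracted from the chain independence by the usual transitivity calculus, namely $a_{i+1},\ldots,a_n\ind_{a_i} a_0,\ldots,a_{i-1}$ for every $i$; combined with the first observation this already yields $\aclq(a_0,\ldots,a_i)\cap\aclq(a_i,\ldots,a_n)=\aclq(a_i)$ from condition (2) alone.

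For the direction from Definition \ref{D:CM} to the parity-split condition I would argue by induction on $n$, peeling off the extreme tuple $a_n$. Writing $E$ and $O$ for the even- and odd-indexed tuples, an element of $\aclq(E)\cap\aclq(O)$ is first pushed, using the telescoping identity at the top index together with the Markov property, into the algebraic closure of the shorter sequence $a_0,\ldots,a_{n-1}$, at which point the inductive hypothesis places it in $\aclq(\emptyset)$. Concretely this is a matter of composing the given identities $\aclq(a_0,\ldots,a_i)\cap\aclq(a_0,\ldots,a_{i-1},a_{i+1})=\aclq(a_0,\ldots,a_{i-1})$ along a zig-zag that enlarges one side to a longer initial segment so as to apply a telescoping identity and then contracts it again; this step is purely combinatorial once the two tools above are available.

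The converse is where I expect the real difficulty to lie, and it is precisely here that the asymmetry of the chain independence becomes delicate: from $a_{i+1}\ind_{a_i} a_0,\ldots,a_{i-1}$ one does not obtain the reversed independence $a_i\ind_{a_{i+1}}\cdots$, so the telescoping identities cannot be read off directly from the intersection observation, and the single parity-split identity must instead be fed into the canonical-base computations. The model case is the base-shift phenomenon $\aclq(a_0,a_2)\cap\aclq(a_1)=\aclq(\emptyset)\ \leadsto\ \aclq(a_0,a_1)\cap\aclq(a_0,a_2)=\aclq(a_0)$ for $n=2$, which I would attack by introducing a model containing the relevant middle tuple, independent from the configuration over it, and transporting the intersection across the Markov independence; the general case would then recover each telescoping identity from the parity-split condition by an induction dual to the one above. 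I expect the bookkeeping of which canonical base lands in which algebraically closed set, together with the careful use of the one available asymmetric independence to move parameters from one side to the other, to be the main obstacle; this is exactly why passing to models, where non-forking is stationary and the intersection observation applies most cleanly, is built into the statement.
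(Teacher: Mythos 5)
Your forward direction (ampleness witnesses satisfy the parity-split condition) is essentially the paper's argument: the same induction, pushing an element of $\aclq(a_i : i \text{ even})\cap\aclq(a_i : i \text{ odd})$ into shorter initial segments using the telescoping identities and the derived independence $a_{i+1},\ldots,a_n\ind_{a_i}a_0,\ldots,a_{i-1}$, and your two ``tools'' are correct. The problem is the converse, and it is not a matter of missing bookkeeping: you have reduced it to a claim the paper never makes and never needs, namely that for the \emph{same} fixed tuples $a_0,\ldots,a_n$, the parity-split condition together with the chain independence implies the telescoping conditions of Definition \ref{D:CM}. This same-tuple equivalence is strictly stronger than the proposition (which only asserts existence of \emph{some} witnesses of ampleness), and your proposal contains no proof of it: the passage about the ``base-shift phenomenon'', introducing a model containing the middle tuple and ``transporting the intersection across the Markov independence'', is a statement of intent, with the decisive step explicitly deferred (``I expect the bookkeeping\dots to be the main obstacle''). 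Since this is exactly where the mathematical content of the converse lies, the proof is incomplete as it stands, and I see no reason to believe the same-tuple claim is provable by the canonical-base manipulations you sketch; as you yourself note, the chain independence cannot be reversed, and Pillay's intersection lemma cannot be applied because the required independence over $a_0$ is simply absent.

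The paper circumvents this entirely by building \emph{new} witnesses: it sets $b_n=a_n$, $b_{n-1}=a_{n-1}$ and recursively $b_i=\aclq(a_ib_{i+1})\cap\aclq(a_ib_{i+2})$ for $i\le n-2$. The key point is that the local identity $\aclq(b_ib_{i+1})\cap\aclq(b_ib_{i+2})=\aclq(b_i)$ then holds \emph{by construction} (both sides of the intersection shrink when $a_i$ is replaced by the larger $b_i$, since $b_i\subset\aclq(a_ib_{i+1})\cap\aclq(a_ib_{i+2})$), the chain independence transfers to the $b_i$'s by transitivity, and \cite[Fact 2.4]{aP00} propagates the local identities to the full telescoping condition $\aclq(b_0\ldots b_i)=\aclq(b_0\ldots b_ib_{i+1})\cap\aclq(b_0\ldots b_ib_{i+2})$. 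The parity-split hypothesis is used only once, to see that $A_0=\aclq(b_0)\cap\aclq(b_1)\subset\aclq(\emptyset)$, so that $b_n\nind b_0$ survives from $a_n\nind a_0$. In other words, where you try to prove that the given tuples already work, the paper closes them up so that the hard identity becomes a tautology; this intersection-closure trick is the one idea your proposal is missing, and it is also what makes the final model-replacement step (which you correctly defer to the standard independence argument) go through, since it is applied to the $b_i$'s rather than to the original $a_i$'s.
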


\begin{proof}
Suppose first that the tuples $a_0,\ldots,a_n$ witness $n$-ampleness.
They clearly satisfy conditions $(\ref{P:ample_indep})$ and
$(\ref{P:ample_forks})$, so we need only prove condition
$(\ref{P:ample_inter})$. Set $X_0=\aclq(\emptyset)$ and
$$ 
X_k=\aclq(a_i:\text{$i\le k$  even})\cap\aclq(a_i:\text{$i\le
  k$ odd}), \text{ for } 1\leq k\leq n.  
 $$

\noindent It suffices to show that $X_k=X_{k-1}$, by induction on $k$. It
clearly holds for $k=1$.  Fix $k\ge 2$, which we may assume to be
even, without loss of generality. Thus

\begin{align*} 
X_k & = \aclq(a_i:\text{$i\le k$ even}) \cap \aclq (a_i:\text{$i\le k-1$ odd}) \\ 
& \subset \acl(a_i:\text{$i\le k$ even}) \cap \aclq (a_0 \ldots a_{k-1}) \\ 
& \subset \acl(a_0 \ldots a_{k-2}a_k) \cap \aclq (a_0 \ldots a_{k-1}) \\
&\subset\aclq(Aa_0\ldots a_{k-2})                                                                                                                                                  
\end{align*} 
by condition \ref{D:CM} $(\ref{D:ample_inter})$. Both transitivity and
condition  \ref{D:CM} $(\ref{D:ample_indep})$  yield  that 
$$
a_{k-1}a_k\ind_{a_{k-2}} a_0\ldots a_{k-3},
$$  
 so $$a_k\ind_{\{a_i:\text{$i\le k-1$ even}\}} a_0\ldots a_{k-3}.$$
In particular, we have that 

\begin{align*} X_k & \subset\aclq(a_i:\text{$i\le k$
                     even})\cap\aclq(a_0\ldots a_{k-2}) \\
                   & \subset \aclq(a_i:\text{$i\le k-1$ even}).
\end{align*}
Hence $X_k\subset  \aclq(a_i:\text{$i\le k-1$ odd})\cap
\aclq(a_i:\text{$i\le k-1$ even})=X_{k-1}$, as desired. 

Suppose now the tuples $a_0,\ldots, a_n$ satisfy conditions
$(\ref{P:ample_inter})$, $(\ref{P:ample_indep})$ and
$(\ref{P:ample_forks})$. Set: 

\begin{itemize} 
\item $b_n=a_n$ and $b_{n-1}=a_{n-1}$;
\item $b_i=\aclq(a_ib_{i+1})\cap\aclq(a_ib_{i+2})$ for $0\le i\le n-2$; 
\item $A_0=\aclq(b_0)\cap\aclq(b_1)$.                                                                                                                          \end{itemize}

Notice that $$
a_n\ldots a_{i+1}\ind_{a_i} a_0\ldots a_{i-1},
$$ 
for every $1\le i<n$ by transitivity and condition
$(\ref{P:ample_indep})$. Since $a_i\subset b_i \subset
\aclq(a_{i+1}\ldots a_n )$ and $b_j\subset\aclq(a_j\ldots a_{i-1}b_i)$
for $j<i$, we have that 
$$
b_{i+1}\ind_{A_0,b_i} b_0\ldots b_{i-1}. 
$$ 
Now,  
$$
A_0\subset \aclq(a_i:\text{$i\le n$ even})\cap\aclq(a_i:\text{$i\le n$
  odd})= \aclq(\emptyset),$$ 
so  $b_n\nind b_0$, as $b_n=a_n$ and $a_0\subset b_0$, by condition
\ref{D:CM} $(\ref{D:ample_forks})$. 

We need only prove \ref{D:CM} $(\ref{D:ample_inter})$ for the
$b_i$'s. Observe that 
$$\aclq(b_ib_{i+1})\cap\aclq(b_ib_{i+2})=\aclq(b_i),$$
so in particular,
$$
\aclq(b_0)=\aclq(b_0b_1)\cap\aclq(b_0b_2).$$ 
Given $1\le i<n-1$, since $b_{i+2}b_{i+1}\ind_{b_i} b_0\ldots
b_{i-1}$, we conclude that 
$$
\aclq(b_0\ldots b_i)=\aclq(b_0\ldots b_ib_{i+1})\cap\aclq(b_0\ldots b_i b_{i+2}),
$$ 
by \cite[Fact 2.4]{aP00}.

Using a similar trick as in \cite[Remarks 2.3 and 2.5]{BMPZ14}, we can
replace the  obtained $b_i$'s by real tuples enumerating small
models: consider recursively for each $i$ a model $M_i$ containing
some  representative of $b_i$ such that 
$$
 M_i\ind_{b_i} M_0\ldots M_{i-1} b_{i+1}\ldots b_n.
$$
Clearly $M_n\nind M_0$, for each $b_i$ is contained in $M_i$. A
straightforward application of transitivity yields that
$M_{i+1}\ind_{M_i} M_0\ldots M_{i-1}$ for $1\le i<n$. 

\noindent It remains hence to see that the models $M_0,\ldots,M_n$
satisfy condition$(\ref{P:ample_inter})$.  To do so, consider some
arbitrary index $i$ with $1\le i<n$ and assume, without loss of
generality, that it is odd. It is easy to see that
$$
\aclq(M_j,b_k:j\le i, k>i \text{ odd})\cap \aclq(M_j,b_k:j\le i, k>i \text{ even})
$$
is contained in 
$$
\aclq(M_j,b_k:j< i, k\ge i \text{ odd})\cap \aclq(M_j,b_k:j\le i, k>i \text{ even}),
$$
which gives that  $\aclq(M_i:i\text{ even})\cap \aclq(M_i:i\text{
  odd}) =  \aclq(\emptyset)$, as desired.

\end{proof}

\section{Theories of Pairs}

From now on, let $T$ denote a complete stable theory in a language $\LL$. We will furthermore assume, for the sake of the presentation, that $T$ has geometric elimination of imaginaries (otherwise consider $\Tq$).

We first provide a uniform approach to both belles pairs as well as
$H$-structures of rank $1$ theories, isolating their common
features. Consider the expansion $\LL_P=\LL \cup\{P\}$ of the language
$\LL$ by a unary predicate $P$, which will be interpreted by an
infinite proper subset. Work inside a sufficiently saturated (strongly
homogenous) $\LL_P$-structure $M$, which is also a model of $T$. We
will not distinguish between $P$ and its interpretation $P^M$.

\begin{definition}
A subset $A\subset M$ is \emph{special} if $A\ind_{P\cap A} P$. 
\end{definition}

In the terminology of \cite{BPV03}, special subsets correspond to
$P$-independent subsets. Furthermore, if we interpret $P$ as a dense
independent set, in the sense of $H$-structures of a stable theory of rank
$1$ \cite{BV16}, a subset is special if and only if it contains its
$H$-basis, by minimality and the fact that the elements of $H$ are
geometrically independent.

\begin{definition}\label{D:Pair}
  A complete $\LL_P$-theory $T_P$ extending $T$ is a \emph{theory of
    pairs} of $T$ if it is stable and any sufficiently saturated (and
  strongly homogeneous) model $M$ of $T_P$ satisfies the following
  conditions:

\begin{enumerate}

\item\label{D:lovely} Given a complete $n$-ary $\LL$-type $p$ over a
  small special subset $A\subset M$, there is a realisation $b$ of $p$
  with

\[ b\ind_A P.\]

\item Two special subsets $A$ and $B$ of $M$ have the same type if and
  only if they have both the same $\LL$-type and the same
  quantifier-free $\LL_P$-type, that is, there is an $\LL$-elementary
  map which maps $A$ to $B$ and $P\cap A$ to $P\cap B$. 

\item Algebraically closed subsets in $T_P$ are special. Moreover, the
  algebraic closure in $T_P$ of a special subset $A$ coincides with
  its $\LL$-algebraic closure.

\item\label{D:forking} Non-forking independence in $T_P$ for special
  subsets $A$ and $B$ over a common $\LL_P$-algebraically closed
  substructure $C$ is characterised as follows:

$$  A\ind^P_C B \Longleftrightarrow\left\{ \begin{minipage}{6mm}
$A\ind_C B $ \\ \mbox{}\quad and \\[3mm]
$A\ind\limits_{C,P} B$ \end{minipage} \right.$$

\item\label{D:triv_EI} If $T$ is trivial, then $T_P$
  has geometric elimination of imaginaries as well. 
\end{enumerate}
\end{definition}

Notice that condition $(\ref{D:forking})$, though not explicitly
stated for $H$-structures in \cite{BV16}, is a straightforward
adaptation of the proof of \cite[Proposition 7.3]{BPV03}. Condition
$(\ref{D:triv_EI})$, which holds for belles paires (see \cite{PV04}),
is trivial for $H$-structures of theories of rank $1$, for they always
eliminate imaginaries geometrically, regardless whether $T$ is trivial
or not (see \cite[Remark 5.14]{BV16}).

A result on preservation of ampleness was obtained in \cite{jfC15} for
$H$-structures of a rank $1$ theory: for $n\geq 2$, the base theory
$T$ is $n$-ample if and only if the theory of the pair is.  In
\cite[Proposition 7.7]{BPV03}, it was shown that the theory of belles
paires of a stable theory $T$ is $1$-based whenever $T$ is. Indeed, it
suffices to show that $T_P$ is \emph{weakly $1$-based}
\cite[Definition 2.3]{BV12}: that is, given a tuple $a$ over a model
$N$, there is some $a' \models \tp_P(a/N)$ such that $a'\ind^P_N a$
and $a'\ind^P_a N$. The advantage of this formulation is that no
imaginaries appear and one reduces the question to a situation in $T$,
by using the characterisation of independence in Definition
\ref{D:Pair} $(\ref{D:forking})$.  Unfortunately, we do not know of an
imaginary-free equivalent definition for higher degrees of
ampleness. As noticed in \cite{PV04}, as soon as an infinite group is
definable in $T$, then the theory of belles paires does not have
geometric elimination of imaginaries. For particular stable theories,
such as the theory of algebraically closed fields, or more generally,
almost strongly minimal theories with infinite $\acl(\emptyset)$,
there is a suitable expansion of the language $\LL_P$ by geometric
sorts in order to obtain geometric elimination of
imaginaries. However, the characterisation of independence in
Definition \ref{D:Pair} $(\ref{D:forking})$ does not hold for
imaginary subsets.

We will now provide a result on preservation of ampleness assuming
that the theory $T$ is trivial.  Recall that  $T$ is fixed complete stable trivial theory
 with geometric elimination of imaginaries, and work inside a sufficiently
saturated (strongly homogeneous) model of an associated theory of
pairs $T_P$ of $T$, which we assume exists. 

\begin{lemma}\label{L:special_unions}
  Given special subsets $A$ and $B$  with a common
  $\LL_P$-algebraically closed substructure $C$, the following
  equivalence holds:

\[ A\ind^P_C B \Longleftrightarrow A\ind_C B. \]

\noindent Furthermore, if $T$ is totally trivial, then $A\cup B$ is again special. 
\end{lemma}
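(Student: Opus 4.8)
The plan is to prove both the equivalence and the "furthermore" clause by reducing each statement, via the characterisation of forking in Definition \ref{D:Pair} (\ref{D:forking}), to a purely $\LL$-theoretic independence statement that triviality supplies. Recall that for special subsets $A$, $B$ over a common $\LL_P$-algebraically closed substructure $C$, we have $A\ind^P_C B$ precisely when $A\ind_C B$ \emph{and} $A\ind_{C,P} B$. Since the left-to-right implication $A\ind^P_C B \Rightarrow A\ind_C B$ is immediate from this characterisation, the entire content of the equivalence is the reverse direction: assuming $A\ind_C B$, I must upgrade this to independence over $C,P$, i.e.\ show $A\ind_{C,P} B$.

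First I would set up the relevant independences. Since $A$ and $B$ are special, we have $A\ind_{P\cap A} P$ and $B\ind_{P\cap B} P$, and since $C\subset A\cap B$ is $\LL_P$-algebraically closed it is itself special, so $C\ind_{P\cap C} P$. The hypothesis gives $A\ind_C B$. The goal $A\ind_{C,P} B$ should follow by combining these with triviality. My strategy is to show that $P$ is, in an appropriate sense, independent from the pair $(A,B)$ over $C$ in a way compatible with $A\ind_C B$, so that adding $P$ to the base does not create dependence between $A$ and $B$. Concretely, I would argue that $A$, $B$ and $P$ are pairwise independent over $C$ (specialness handles the pairs involving $P$, the hypothesis handles $A$ versus $B$, modulo taking suitable nonforking extensions so the base sets line up), and then invoke triviality to conclude they are mutually independent over $C$; from $A,B \ind_C P$ together with $A\ind_C B$ one then reads off $A\ind_{C,P} B$ by transitivity. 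The subtle point is matching up the bases of the various specialness independences ($P\cap A$, $P\cap B$, $P\cap C$) with $C$ itself, which is where I expect to lean on the fact that $C$ is $\LL_P$-algebraically closed and hence captures these intersections.

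For the "furthermore" clause under total triviality, I must show $A\cup B \ind_{P\cap(A\cup B)} P$, i.e.\ that $A\cup B$ is special. Here the natural approach is to use the already-established content together with the definition of total triviality: from $A\ind_{C} P$ (specialness of $A$ relative to the common base, after reconciling $P\cap A$ with the picture) and $B\ind_C P$, total triviality directly yields $A\cup B \ind_C P$, which is the key strengthening that plain triviality does not give. I would then transfer this from the base $C$ down to the base $P\cap(A\cup B)$, again using that $C$ is $\LL_P$-algebraically closed and contains the relevant traces of $P$, to obtain specialness of $A\cup B$ on the nose.

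The main obstacle, in both parts, is bookkeeping of the base sets: the definition of special uses the base $P\cap A$ (resp.\ $P\cap B$, $P\cap(A\cup B)$), whereas the forking characterisation and the equivalence are stated over the fixed $\LL_P$-closed set $C$. Turning "$A$ is independent from $P$ over its own trace of $P$" into "$A$ is independent from $P$ over $C$" requires knowing that $C$ already contains $P\cap A$ and $P\cap B$ (which it should, being $\LL_P$-algebraically closed and contained in both $A$ and $B$) and a transitivity argument to shift the base. Once the bases are correctly aligned, both the equivalence and the specialness of $A\cup B$ fall out of (local, respectively total) triviality applied to the three sets $A$, $B$, $P$; the genuinely new hypothesis doing work in the second statement is total triviality, precisely because preserving specialness of unions is exactly the closure property that total triviality, but not ordinary triviality, guarantees.
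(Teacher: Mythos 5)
Your overall strategy---pairwise independence of $A$, $B$ and $P$, then triviality to get mutual independence, then transitivity---is indeed the paper's strategy, but the way you resolve the ``bookkeeping of bases,'' which you correctly identify as the crux, is wrong. You claim that $C$, being $\LL_P$-algebraically closed and contained in both $A$ and $B$, should contain $P\cap A$ and $P\cap B$. It does not: algebraic closedness of $C$ gives $\acl_P(C)=C$ and hence the containment $P\cap C\subset P\cap A$, not the reverse. A special set $A\supset C$ may well contain elements of $P$ that are not algebraic over $C$; for instance $A=\acl_P(C,h)$ with $h\in P$, $h\notin C$, is special (by condition (3) of Definition \ref{D:Pair}, $\LL_P$-algebraically closed sets are special), yet $h\in (P\cap A)\setminus C$. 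For such an $A$ the intermediate independence $A\ind_C P$ that your argument needs is simply false, since $h$ lies in $A$ and in $P$ but not in $\acl(C)$. So the pairwise-independence-over-$C$ picture cannot be set up, and ``taking suitable nonforking extensions'' cannot repair it, because $A$, $B$ and $P$ are fixed sets, not types one is free to move.

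The correct fix goes in the opposite direction: instead of shrinking the specialness bases down to $C$, enlarge the working base to $D=C\cup(P\cap A)\cup(P\cap B)$, as the paper does. Over $D$ everything holds by monotonicity alone: $A\ind_{P\cap A}P$ gives $A\ind_D P$ (adding $P\cap B\subset P$ to the base and $C\subset A$ is harmless); similarly $B\ind_D P$; and $A\ind_C B$ gives $A\ind_D B$, since the added parameters lie in $A\cup B$. Triviality then yields $A\ind_D B,P$, and this is literally the goal: $A\ind_{D,P}B$ is the same as $A\ind_{C,P}B$ because $P\cap A$ and $P\cap B$ are subsets of $P$. The same repair applies to your ``furthermore'' clause: there is no need (and no valid way in general) to transfer a base downward from $C$ to $P\cap(A\cup B)$; one applies total triviality directly over $(P\cap A)\cup(P\cap B)$, which equals $P\cap(A\cup B)$, to the two specialness independences $A\ind_{P\cap A,P\cap B}P$ and $B\ind_{P\cap A,P\cap B}P$, and specialness of $A\cup B$ comes out on the nose---note also that this part needs no independence hypothesis between $A$ and $B$ at all.
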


\begin{proof}
  We need only prove that $A\ind_{C,P} B$, whenever $A\ind_C B$. Since
  $A$ is special and contains $C$, we have that
  $$A\ind_{C, P\cap A, P\cap B} P.$$ Similarly, the independence
  $B\ind_{C, P\cap A, P\cap B} P$ holds. Since $A\ind_C B$, we have
  that $$A\ind_{C, P\cap A, P\cap B} B.$$ Triviality of $T$ yields
  that
\[A \ind_{C, P\cap A, P\cap B} B, P, \]
which implies $A\ind_{C,P} B$, as desired.

\noindent If $T$ is totally trivial, then the independences
$A\ind_{ P\cap A, P\cap B} P$ and $B\ind_{P\cap A, P\cap B} P$ imply
that $$A\cup B\ind_{ P\cap A, P\cap B} P,$$ so $A\cup B$ is special.
\end{proof}

\begin{theorem}\label{T:main}
  Let $T_P$ be a theory of pairs of a trivial stable theory $T$. For
  any natural number $n$, the theory $T$ is $n$-ample if and only if
  $T_P$ is.
\end{theorem}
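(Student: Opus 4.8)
The plan is to prove both directions separately, exploiting the characterisation of independence in pairs from Definition \ref{D:Pair} $(\ref{D:forking})$ together with Lemma \ref{L:special_unions}, which reduces $\ind^P$ to $\ind$ for special sets sharing a common $\LL_P$-algebraically closed base. The key simplification is that, by Proposition \ref{P:ample_oddeven}, I may work with the symmetric ``odd/even'' version of ampleness and assume all witnessing tuples are \emph{real} and enumerate small models; since models of $T$ (or $T_P$) are automatically special and $\LL_P$-algebraic closures of special sets agree with $\LL$-algebraic closures, this avoids the imaginary obstacle that motivated the triviality hypothesis in the first place.

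For the direction ``$T_P$ is $n$-ample $\Rightarrow$ $T$ is $n$-ample'', I would start with witnesses $a_0,\ldots,a_n$ for $n$-ampleness of $T_P$, arranged (via Proposition \ref{P:ample_oddeven}) as real models with a common $\LL_P$-algebraically closed base $C$. The independence conditions $(\ref{P:ample_indep})$ and the forking condition $(\ref{P:ample_forks})$ are stated with $\ind^P$; by Lemma \ref{L:special_unions} these are equivalent to the corresponding $\ind$-statements in $T$, so the $T$-side independences transfer immediately. The intersection condition $(\ref{P:ample_inter})$ uses $\aclq$ in $T_P$, but since all sets are special, $\LL_P$-algebraic closure equals $\LL$-algebraic closure, so the same intersection condition holds in $T$. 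Thus the same tuples witness $n$-ampleness of $T$.

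For the converse ``$T$ is $n$-ample $\Rightarrow$ $T_P$ is $n$-ample'', I start with real models $a_0,\ldots,a_n$ witnessing the odd/even version in $T$ over some base, which I may take to be a small model and then close off under $\acl^P$. The delicate point is that the $T$-independences $a_{i+1}\ind_{a_i}\cdots$ must be upgraded to $T_P$-independences $\ind^P$, which by Definition \ref{D:Pair} $(\ref{D:forking})$ additionally requires independence over $P$. Here I would use condition \ref{D:Pair} $(\ref{D:lovely})$ to realise the relevant types while keeping them independent from $P$ over the base, i.e.\ arrange the whole configuration to be special and in ``general position'' with respect to $P$; Lemma \ref{L:special_unions} then gives $\ind^P$ from $\ind$ on the nose. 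The intersection condition in $T_P$ again reduces to the one in $T$ once everything is special, since the two algebraic closures coincide.

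The main obstacle I anticipate is the converse direction's realisation step: ensuring simultaneously that the new configuration satisfies \emph{all} the independence relations $\ind^P$ while preserving the forking $a_n\nind^P a_0$ and the intersection condition. The subtlety is that forcing independence from $P$ at each stage could inadvertently destroy the dependence required by $(\ref{P:ample_forks})$, or could enlarge the relevant algebraic closures in a way that breaks $(\ref{P:ample_inter})$. This is precisely where triviality of $T$ is essential, via Lemma \ref{L:special_unions}: it guarantees that $\ind$ over $C$ already yields $\ind$ over $C,P$ for special sets, so that making the configuration special is enough to secure all the $\ind^P$-conditions at once, without any iterative adjustment that might spoil the genuine forking at the top. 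Verifying carefully that the constructed sets are indeed special, and that the forking $a_n\nind a_0$ survives the passage to the pair, will be the crux of the argument.
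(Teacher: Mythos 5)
Your skeleton matches the paper's: Proposition \ref{P:ample_oddeven}, Lemma \ref{L:special_unions} and the extension property of Definition \ref{D:Pair}~$(\ref{D:lovely})$ are exactly the three ingredients used there, and your direction ``$T$ $n$-ample $\Rightarrow$ $T_P$ $n$-ample'' is essentially the paper's proof: a \emph{single} realisation of the full $\LL$-type of $(a_0,\ldots,a_n)$ independent from $P$ makes every subcollection special at once, so the iterative adjustment you worry about in your last paragraph never arises; once the whole configuration is independent from $P$, the equality of the two algebraic closures and of the two independence relations is legitimate for all relevant sets, and the three conditions transfer. (Incidentally, models of $T$ inside the pair are \emph{not} automatically special; what is true, by condition (3) of Definition \ref{D:Pair}, is that $\acl_P$-closed sets, in particular models of $T_P$, are special.)

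The genuine gap is in the direction you declare immediate, ``$T_P$ $n$-ample $\Rightarrow$ $T$ $n$-ample''. There you justify transferring both the independences and the intersection condition by ``since all sets are special, $\LL_P$-algebraic closure equals $\LL$-algebraic closure''. But the sets whose closures occur in those conditions are \emph{unions} of the witnesses: $\{a_i : i\le n \text{ even}\}$ and $\{a_i : i\le n \text{ odd}\}$ in condition $(\ref{P:ample_inter})$, and $a_0,\ldots,a_{i-1}$ on the right-hand side of condition $(\ref{P:ample_indep})$. Each $a_i$ is special, but a union of special sets need not be special when $T$ is merely trivial: that implication is precisely the ``furthermore'' clause of Lemma \ref{L:special_unions}, which requires \emph{total} triviality (cf.\ Remark \ref{R:main}). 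So the equality $\acl_P=\acl$ on these unions is unjustified, and Lemma \ref{L:special_unions} cannot be applied to them directly (the lemma moreover requires the base $C$ to be a common substructure of both sides, which $a_i$ is not). The paper's proof is built around exactly this obstacle: for the independences it first passes to $\acl_P$-closures, $\acl_P(a_{i+1},a_i)\ind^P_{a_i}\acl_P(a_0,\ldots,a_i)$, which \emph{are} special by condition (3) of Definition \ref{D:Pair}, and only then invokes the Lemma; for the intersection condition it never claims equality of closures, but uses only the one-directional containment $\acl(\cdot)\subset\acl_P(\cdot)$, giving
$$
\acl(a_i : i \text{ even})\cap\acl(a_i : i \text{ odd})\subset \acl_P(a_i : i \text{ even})\cap\acl_P(a_i : i \text{ odd})=\acl_P(\emptyset),
$$
which suffices because in the odd/even formulation the right-hand side is the special, hence $\acl$-closed, set $\acl_P(\emptyset)$. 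This is also the real reason Proposition \ref{P:ample_oddeven} is indispensable in this direction: with the standard Definition \ref{D:CM}~$(\ref{D:ample_inter})$ the right-hand side would be $\acl_P(a_0,\ldots,a_{i-1})$, which cannot be identified with $\acl(a_0,\ldots,a_{i-1})$. You chose the odd/even version, but your argument never exploits the feature of it that actually makes the proof work.
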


\begin{proof}

  Suppose first that the real tuples $a_0,\ldots,a_n$ witness that $T$
  is $n$-ample over some set of parameters, which we assume to be
  empty.  By condition \ref{D:Pair} $(\ref{D:lovely})$, we may assume
  that

 \[a_0,\ldots,a_n\ind P. \]

 \noindent In particular, any subcollection of $a_0,\ldots,a_n$ is
 special, so the algebraic closures in $T_P$ and $T$ coincide. By
 Lemma \ref{L:special_unions}, so does forking (in $T_P$ and $T$). In
 particular, the tuples $a_0,\ldots,a_n$ witness that $T_P$ is
 $n$-ample.

 For the converse, suppose $T$ is not $n$-ample. By Proposition
 \ref{P:ample_oddeven}, let $a_0,\ldots, a_n$ be given with:

	\begin{enumerate}
	\renewcommand{\theenumi}{\alph{enumi}}		
      \item
        $\acl_P(a_i\,,\, i\leq n \text{ even})\cap\acl_P(a_i\,,\,
        i\leq n \text{ odd})= \acl_P(\emptyset)$,
	\item $a_{i+1} \ind^P_{a_i} a_0,\ldots, a_{i-1}$ for every $1\leq i<n$.
\end{enumerate} 
\noindent We may assume that each $a_i$ is $\LL_P$-algebraically
closed, hence special.  By Lemma \ref{L:special_unions}, we need only to
show that $a_n\ind a_0$ in $T$.

The independence $a_{i+1} \ind^P_{a_i} a_0,\ldots, a_{i-1}$ implies that 

\[\acl_P(a_{i+1},a_i) \ind^P_{a_i} \acl_P(a_0,\ldots, a_i), \]

so Lemma \ref{L:special_unions} yields that 
\[\acl_P(a_{i+1},a_i) \ind_{a_i} \acl_P(a_0,\ldots, a_i), \]

and particularly 
\[a_{i+1} \ind_{\acl_P(\emptyset),a_i} a_0,\ldots, a_{i-1}.\]

Furthermore, 
\begin{multline*}
  \acl(a_i\,,\, i\leq n \text{ even})\cap\acl(a_i\,,\, i\leq n \text{
    odd})\subset \\ \acl_P(a_i\,,\, i\leq n \text{
    even})\cap\acl_P(a_i\,,\, i\leq n \text{ odd}) =
  \acl_P(\emptyset).
\end{multline*}

Working over $\acl_P(\emptyset)$, we have that $a_n \ind a_0$, since
$T$ is $n$-ample, as desired.
\end{proof}

\begin{remark}\label{R:main}
  If $T$ is totally trivial, we can conclude that $T_P$ has the same
  degree of ampleness as $T$, without using Proposition
  \ref{P:ample_oddeven}. Indeed, if the tuples $a_0,\ldots, a_n$ are
  special and satisfy

	\begin{enumerate}
	\renewcommand{\theenumi}{\alph{enumi}}		
	\item  $\acl_P(a_0,\ldots,a_i)\cap\acl_P(a_0,\ldots,a_{i-1},a_{i+1})=
	\acl_P(a_0,\ldots,a_{i-1})$ for every $0\leq i<n$,
	\item $a_{i+1} \ind^P_{a_i} a_0,\ldots, a_{i-1}$ for every $1\leq i<n$,
\end{enumerate} 

\noindent then any subtuple $a_0,\ldots,a_i$ is again special, by
Lemma \ref{L:special_unions}, so we conclude directly that
$a_n\ind a_0$, because $T$ is not $n$-ample.
\end{remark}

We will now conclude with some examples illustrating the above result. 

\begin{example}
Let $T$ be the theory of a free pseudoplane, a (bicolored) infinite branching graph with
no (non-trivial) loops. This theory is $\omega$-stable, totally trivial not $2$-ample but $1$-ample, and has weak elimination of imaginaries \cite[Proposition 2.1]{BP00}. 
The theory $T_P$ of belles paires of $T$ is axiomatised by the following elementary properties:
\begin{itemize}
\item The universe is a free pseudoplane. 
\item Every element of $P$ has infinitely many direct neighbours in $P$, and every \emph{reduced} path between elements of $P$ is contained in $P$.
\item Given an element $a$ in a finite subset   subset $A$, there is some element $b$  connected to $a$ with $b \not\in P\cup A$. 
\end{itemize}
It is very easy to see that an $\aleph_0$-saturated model of the above theory is again a belle paire. It suffices to note that every finite set is contained in a finite superset $A$ such that $A\cup P$ is \emph{nice} in the terminology of \cite{BP00}, that is, closed under reduced paths. Thus, the theory $T_P$ is stable and is a theory of pairs in the sense of Definition \ref{D:Pair}. In particular,  Theorem \ref{T:main} yields that $T_P$ is $1$-ample but not $2$-ample.

\noindent Likewise, a similar argument yields that the theory of belles paires of the free pseudospace \cite{BP00} is $2$-ample but not $3$-ample. Furthermore, if we consider the \emph{free $n$-dimensional pseudospace} \cite{kT14, BMPZ14} (or more generally, free orthogonal buildings \cite{BMPZ17}), which are  stable $n$-ample but not $(n+1)$-ample with weak elimination of imaginaries and totally trivial, a similar axiomatisation of the theory of belles paires can be obtained. It suffices to consider the bi-interpretable structure consisting of the space of flags, where there are no reduced loops.  In particular,  Theorem \ref{T:main} yields that the theory of belles paires of free $n$-dimensional pseudospaces is  $n$-ample but not $(n+1)$-ample. 
\end{example}

\begin{example}
We will now consider a trivial but not totally trivial stable theory, which first appeared in \cite{jG91}. 

\noindent Consider the theory $T$ of a two-sorted structure $(A,B)$ with a binary function $\sigma$ from $A\times B$ to $B$ such that any $\sigma(a, \cdot)$ induces a bijection on $B$. Thus, we can talk about inverses of elements of $A$ by considering the inverse of $\sigma(a,\cdot)$. However, note that $A$ is neither closed under inverses nor has a group structure. A word of elements (and their inverses) of $A$ is \emph{reduced} if it is the empty word or does not contain a subword of the form $a \cdot a\inv$. We require that the bijection of every non-trivial reduced word has no fixed points on $B$. One can regard the second sort $B$ as a binary (directed) graph with edges labelled by elements of $A$. Namely, two elements $x$ and $y$ in $B$ are connected by an edge labelled by $a$ in $A$ if and only if $\sigma(a,x)=y$. A subset $D$ of $(A,B)$ is \emph{nice} if $D\cap A$ and $D\cap B$ are both non-empty and the path of a reduced word  between two elements of $D\cap B$ is contained in $D\cap B$, and the reduced word is contained in $D\cap A$. 

The theory of the above structure is complete, since the quantifier-free type of a nice set implies its type.  It is $\omega$-stable with weak elimination of imaginaries, and forking independence can be easily described for nice sets $D_1$, $D_2$ and $D_3$ with $D_3\subset D_1\cap D_2$: 

\[ \begin{array}[t]{ccc}
D_1\ind_{D_3} D_2 & \Longleftrightarrow & \begin{cases} \bullet\,\, D_1\cap D_2 \cap A \subset D_3 \cap A  \text{, and  }\\[2mm]
\bullet\,\,	\parbox{8cm}{Every reduced path between an element of $D_1\cap B$ and an element of $D_2\cap B$ factors through $D_3\cap B$.}\end{cases}
\end{array}\]

\noindent This theory is trivial yet not totally trivial. However, it resembles the free pseudoplane, for it is $1$-ample but not $2$-ample. Since the theory is $2$-dimensional \cite{jG91}, it does not have the finite cover property \cite[Corollary 4]{eH89}. Therefore the theory of belles paires is axiomatisable and a theory of pairs in the sense of Definition \ref{D:Pair}. By Theorem \ref{T:main}, the theory of belles paires  is $1$-ample, but not $2$-ample. 
\end{example}

\end{document}